\newtheorem{theorem}{Theorem}[section]
\newtheorem{lemma}[theorem]{Lemma}
\newtheorem{corollary}[theorem]{Corollary}
\theoremstyle{definition}
\theoremstyle{remark}
\newtheorem{remark}[theorem]{Remark}
\numberwithin{equation}{section}
\newcommand{\RR}{\mathbb{R}}
\newcommand{\CC}{\mathbb{C}}
\newcommand{\NN}{\mathbb{N}}
\newcommand{\cT}{\mathcal{T}}
\newcommand{\cK}{\mathcal{K}}
\newcommand{\eps}{\varepsilon}
\newcommand{\euler}{\mathrm{e}}
\renewcommand{\restriction}{|}
\newcommand*\Diff[1]{\mathop{}\!\mathrm{d}#1}
\DeclarePairedDelimiter{\abs}{|}{|}
\DeclarePairedDelimiter{\norm}{\lVert}{\rVert}
\newcommand{\bes}{\begin{equation*}}
\newcommand{\ees}{\end{equation*}}
\newcommand{\be}{\begin{equation}}
\newcommand{\ee}{\end{equation}}
\newcommand{\eqs}[1]{\begin{align*}#1\end{align*}}
\title[Uncertainty principles in Gelfand-Shilov spaces]
{Uncertainty principles with error term in Gelfand-Shilov spaces}
\subjclass[2010]{Primary 93B07; Secondary 93B05.}
\keywords{Unique continuation principle, observability, Gelfand-Shilov smoothing effects.}
\author[A.~Dicke]{Alexander Dicke}
\author[A.~Seelmann]{Albrecht Seelmann}
\address[A.D., A.S.]{
	Technische Univer\-si\-t\"at Dortmund,
	Germany
}
\email{\{alexander.dicke,albrecht.seelmann\}@mathematik.tu-dortmund.de}
\begin{document}
%
%
\begin{abstract}
	In this note, an alternative approach to establish observability for semigroups based on their smoothing properties is presented. 
	The results discussed here reproduce some of those recently obtained in [arXiv:2112.01788], but the current proof allows to
	get rid of several technical assumptions by following the standard complex analytic approach established by Kovrijkine combined
	with an idea from [arXiv:2201.02370].
\end{abstract}
\maketitle
%
%

\section{Introduction and results}

Uncertainty principles are frequently used in control theory to prove observability for certain abstract Cauchy problems. 
Often this is done via the so-called Lebeau-Robbiano method, where an uncertainty principle for elements in the spectral subspace, 
a so-called \emph{spectral inequality}, is combined with a \emph{dissipation estimate}, see \cite{TenenbaumT-11, BeauchardPS-18,
NakicTTV-20, GallaunST-20}.
The aforementioned spectral inequalities were studied for several differential operators, see, e.g., 
\cite{EgidiV-20, BeauchardJPS-21, EgidiS-21, MartinPS-20-arxiv, 
DickeSV-21, DickeSV-22, MoyanoL-19, NakicTTVS-20, DickeRST-20}
and the references cited therein. Suitable dissipation estimates to treat also semigroups generated by some quadratic differential
operators were provided in \cite{BeauchardPS-18, Alphonse-20, MartinPS-20-arxiv, DickeSV-22}.

A different approach was introduced in \cite{Martin-21}, based on \cite{Miller-10}. It allows to derive observability estimates
from uncertainty principles with error term established for functions in the range of the semigroup associated to the abstract
Cauchy problem. 
In the situation of \cite{Martin-21}, these uncertainty principles are established using Gelfand-Shilov smoothing effects. By the
latter we mean that for the strongly continuous contraction semigroup $(\cT(t))_{t\geq 0}$ on $L^2(\RR^d)$ there exist constants
$C \geq 1$, $t_0 \in (0,1)$, $\nu>0$ and $0<\mu\leq 1$ with $\nu+\mu\geq 1$, and $r_1 \geq 0$, $r_2 > 0$, such that for all
$g\in L^2(\RR^d)$ we have
\be\label{eq:smoothing}
	\norm{(1+\abs{x}^2)^{n/2}\partial^\beta\cT(t)g}_{L^2(\RR^d)}
	\leq
	\frac{C^{1+n+\abs{\beta}}}{t^{r_1+r_2(n+\abs{\beta})}} 
		(n!)^\nu(\abs{\beta}!)^\mu \norm{g}_{L^2(\RR^d)}
\ee
for all $t\in (0,t_0)$ and all $n\in\NN$, $\beta\in\NN_0^d$.

In this context we prove the following variant of \cite[Theorem~2.3]{Martin-21}.

\begin{theorem}\label{thm:uncertainty} 
	Suppose that $f \in C^\infty(\RR^d)$ satisfies
	\be\label{eq:smoothing-f}
		\norm{(1+\abs{x}^2)^{n/2}\partial^\beta f}_{L^2(\RR^d)}
		\leq
		D_1D_2^{n+\abs{\beta}}(n!)^{\nu}(\abs{\beta}!)^\mu
		,
		\quad
		n\in\NN_0,\beta\in\NN_0^d
		,
	\ee
	with some $D_1 > 0$, $D_2 \geq 1$, $\nu \geq 0$, and $0 \leq \mu < 1$.
	Moreover, let $\delta \in [0,1]$ with $s = \delta\nu+\mu < 1$,
	and let $\rho\colon \RR^d \to (0,\infty)$ be a measurable function satisfying 
	\be
		\rho(x) \leq R(1+\abs{x}^2)^{\delta/2}
		\quad 
		\text{for all}\quad
		x\in\RR^d
	\ee
	with some $R \geq 1$ and
	\be\label{eq:smallness}
		\rho(x) 
		\leq 
		\eta\abs{x}
		\quad 
		\text{for all}\quad
		\abs{x} \geq r_0
	\ee
	with some $\eta \in (0,1)$ and some $r_0 \geq 1$.
	
	Then, for every measurable set $\omega \subset \RR^d$ satisfying 
	\be\label{eq:sensor-set}
		\frac{\abs{B(x,\rho(x))\cap \omega}}{\abs{B(x,\rho(x))}}
		\geq 
		\gamma
		\quad \text{for all} \quad 
		x\in\RR^d
	\ee
	with some $\gamma\in (0,1)$ and for every $\eps \in (0,1]$, we have
	\be\label{eq:uncertainty}
		\norm{f}_{L^2(\RR^d)}^2
		\leq 
		\euler^{K\cdot 
		\bigr( 1 + \log\frac{1}{\eps} + D_2^{2/(1-s)}\bigr)}
		 \norm{f}_{L^2(\omega)}^2 + \eps D_1^2
		,
	\ee
	where $K \geq 1$ is a constant depending on $\gamma, R,r_0,\eta,\nu,s$, and the dimension $d$.
\end{theorem}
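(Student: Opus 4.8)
The plan is to follow Kovrijkine's complex-analytic approach, adapted to the Gelfand–Shilov setting. The bound \eqref{eq:smoothing-f} tells us that $f$ has analytic-type growth control, so the first task is to convert it into pointwise analyticity estimates. Specifically, using Sobolev embedding and \eqref{eq:smoothing-f}, one should derive that on each ball $B(x,\rho(x))$ the function $f$ extends holomorphically (in each variable, or along suitable complex lines) with a bound of the form $\lvert f\rvert \lesssim M(x)$ on a complex polydisc of radius comparable to $\rho(x)$, where $M(x)^2$ controls $\int_{B(x,\rho(x))}\lvert f\rvert^2$. The key gain is that, because $\mu<1$ and $s=\delta\nu+\mu<1$, the factors $(\lvert\beta\rvert!)^\mu$ and the weight $(1+\lvert x\rvert^2)^{\delta/2}\sim\rho(x)$ combine to give a convergent Taylor series after rescaling by $\rho(x)$; the price is a term like $D_2^{1/(1-\mu)}$ or $D_2^{1/(1-s)}$ absorbed into the radius, which is the source of the $D_2^{4/(1-s)}$ in the exponent of \eqref{eq:uncertainty}.

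**Next**, I would split $\RR^d$ into a region where $f$ is "good" and a region where it is small. Introduce $A = \{x : \int_{B(x,\rho(x))}\lvert f\rvert^2 \,\mathrm{d}y \geq \text{(threshold involving }\eps D_1^2)\}$; on the complement the $L^2$-mass of $f$ is controlled by $\eps D_1^2$ directly (after a Besicovitch-type covering argument, using that $\rho(x)\le\eta\lvert x\rvert$ for large $\lvert x\rvert$ keeps the balls from being too large relative to their centers). This is exactly the role of the error term: it lets us discard the tail where $f$ is genuinely tiny, something not available in the classical spectral-inequality setting. On the good set $A$ we work ball by ball.

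**On a good ball** $B=B(x,\rho(x))$, the holomorphic extension has controlled growth, and $\lvert B\cap\omega\rvert/\lvert B\rvert\ge\gamma$ by \eqref{eq:sensor-set}. Here one invokes the standard one-dimensional machinery: reduce to a holomorphic function of one complex variable on a disc, estimate the number of its zeros via Jensen's formula in terms of $\log(M(x)^2/\int_B\lvert f\rvert^2)$ (a "degree" $N$), and then apply a Remez-type / Turán-type inequality to conclude $\int_B\lvert f\rvert^2 \le C^{N}\int_{B\cap\omega}\lvert f\rvert^2$. The exponent $N$ is controlled by $1+\log\frac1\eps + D_2^{4/(1-s)}$ uniformly in $x$ — this uniformity is where the hypotheses $R\ge1$, $r_0\ge1$, $\eta<1$ and the relation $\rho(x)\lesssim(1+\lvert x\rvert^2)^{\delta/2}$ are all used, to make the logarithmic ratio bounded independently of the ball. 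Finally, sum over a Besicovitch subcover of $A$, paying only a dimensional constant for the overlap, and add back the tail estimate to obtain \eqref{eq:uncertainty}.

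**The main obstacle** I anticipate is the uniform control of the analytic "degree" $N$ over all balls simultaneously: one must track how the radius of holomorphy, the sup-norm bound $M(x)$, and the $L^2$-mass on $B(x,\rho(x))$ all scale with $\lvert x\rvert$, and verify that their combination yields a ratio whose logarithm is bounded by a fixed multiple of $1+\log\frac1\eps+D_2^{4/(1-s)}$. Getting the correct power of $D_2$ (and checking that $\mu<1$, not merely $\mu\le1$, is what makes the relevant series converge) is the delicate bookkeeping step; the rest is a fairly standard application of Kovrijkine's scheme combined with the tail-truncation idea from [arXiv:2201.02370].
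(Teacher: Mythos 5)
Your overall roadmap (Besicovitch covering, Kovrijkine's analytic scheme on each ball, truncation to a ball $B(0,r)$ absorbing the tail into $\eps D_1^2$) matches the structure of the paper, and the decision to exploit $s<1$ so that the Taylor series after rescaling by $\rho(x)$ converges is correct. However, there is a genuine gap in how you propose to split the covering into ``good'' and ``bad'' elements. You define good balls by a threshold on the local mass $\int_{B(x,\rho(x))}\lvert f\rvert^2$, intending to discard the below-threshold balls. This does not work here: the theorem imposes no uniform positive lower bound on $\rho$ (dropping that assumption from~\cite{Martin-21} is precisely one of the points of this result), so the Besicovitch subcover of $B(0,r)$ may consist of arbitrarily many, arbitrarily small balls, and ``each local mass below $\tau$'' gives no bound at all on their total contribution. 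Equivalently, if you try to compensate by choosing $\tau$ proportional to $1/N_{\text{balls}}$, you no longer control $N_{\text{balls}}$ and hence not the Jensen degree $N$ either. A threshold definition therefore secretly reinstates the very hypothesis the theorem removes.

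The paper's definition of ``good'' is different and is the key device you are missing: a ball $Q_k$ is declared good when a \emph{local Bernstein-type inequality} holds, namely
\begin{equation*}
  \sum_{\abs{\beta}=m}\frac{1}{\beta!}\norm{w^m\partial^\beta f}_{L^2(Q_k)}^2
  \leq\frac{2\kappa}{\eps}\cdot\frac{2^{m+1}d^m q_m^2}{m!}\norm{f}_{L^2(Q_k)}^2
  \quad\text{for all }m,
\end{equation*}
with $q_m=\tilde D_2^{2m}(m!)^s$. On a bad ball one inequality fails, so $\norm{f}_{L^2(Q_k)}^2$ is bounded by a small multiple of a sum of local derivative masses; summing over all bad balls and using the \emph{global} estimate~\eqref{eq:smoothing-f-delta} (together with the bounded overlap) yields the bound $\eps D_1^2/2$ with \emph{no} lower bound on $\rho$ and no count of the balls. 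Crucially, on a good ball this produces a pointwise estimate $\abs{\partial^\beta f(x_k)}\lesssim C(k,m)^{1/2}\norm{f}_{L^2(Q_k)}/\sqrt{\abs{Q_k}}$, so the quantity $M_k=\sqrt{\abs{Q_k}}\sup\abs{F_k}/\norm{f}_{L^2(Q_k)}$ entering the Kovrijkine-type local lemma is bounded uniformly in $k$ by an explicit function of $\eps,\kappa$, and $D=40d^{3/2}\tilde D_2^2 R\max\{r_0,(1-\eta)^{-1}\}$, giving $\log M_k\lesssim 1+\log\frac1\eps+(2D)^{2/(1-s)}$ directly, whereas your version would couple $M(x)$ to $D_1$ and then need the threshold to decouple it. Replacing your threshold dichotomy with this Bernstein-type dichotomy fixes the argument and explains the factor $D_2^{4/(1-s)}$ cleanly.
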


Here, $B(x,\rho(x))$ denotes the open Euclidean ball of radius $\rho(x) > 0$ centered at $x$.
Note that \eqref{eq:smallness} is automatically satisfied if $\delta < 1$ with, say, $\eta = 1/2$ and
$r_0 \geq (4R)^{1/(1-\delta)}$.

In \cite{Martin-21}, the same result is proved but under more technical
assumptions, namely that $\rho$ is a Lipschitz contraction with a uniform positive lower bound. On the other hand, the case
$s = 1$, which also allows $\mu = 1$, is treated in \cite{Martin-21} but is not in the scope of the method we discuss here.
However, \cite{Martin-21} does not present any application in terms of observability for this case.

Our proof, as well as the one in \cite{Martin-21}, follows the approach from \cite{Kovrijkine-01,Kovrijkine-thesis}. The main
idea of the latter is to localize certain Bernstein-type inequalities on so-called \emph{good} elements of some covering of
$\RR^d$. Since in the setting of Theorem~\ref{thm:uncertainty} there is no Bernstein-type inequality at disposal, the definition of
good elements replaces, in some sense, the missing Bernstein-type inequalities needed. The proof then reduces to a local estimate
for (quasi-)analytic functions. For the latter, \cite{Martin-21} uses an estimate for quasianalytic functions proven in 
\cite{NazarovSV-04}, see also the $L^2$-version in \cite[Proposition~5.10]{Martin-21}, and a suitable estimate for the so-called
\emph{Bang degree}. By contrast, we rely on the more standard approach for (complex) analytic functions from
\cite{Kovrijkine-01,Kovrijkine-thesis} by estimating suitable Taylor expansions. This is combined with ideas introduced in
\cite{DickeSV-21,DickeSV-22} that incorporate the quadratic decay guaranteed by \eqref{eq:smoothing-f} to reduce the
considerations to a bounded subset of $\RR^d$. This allows us to obtain a more streamlined proof while getting rid of the
mentioned technical assumptions in \cite{Martin-21}.

If $(\cT(t))_{t\geq 0}$ is a strongly continuous contraction semigroup on $L^2(\RR^d)$ satisfying the Gelfand-Shilov smoothing
effects \eqref{eq:smoothing}, then $f = \cT(t)g$ with $g \in L^2(\RR^d)$ and $t \in (0,t_0)$ satisfies \eqref{eq:smoothing-f}
with 
\[
	D_1 = \frac{C}{t^{r_1}}\norm{g}_{L^2(\RR^d)}
	\quad \text{and} \quad 
	D_2 = \frac{C}{t^{r_2}}
	.
\]
Thus, choosing the constant $\eps$ in Theorem~\ref{thm:uncertainty} appropriately, we are able to apply \cite[Lemma~2.1]{Miller-10} in 
literally the same way as in the proof of \cite[Theorem~2.11]{Martin-21} and thereby obtain the following observability result,
which reproduces \cite[Theorem~2.11]{Martin-21}. We omit the proof for brevity.

\begin{corollary}\label{cor:observability}
	Suppose that $(\cT(t))_{t\geq 0}$ is a strongly continuous contraction semigroup satisfying \eqref{eq:smoothing} with some
	constants $C \geq 1$, $\nu \geq 0$, 
	$0\leq \mu < 1$, $r_1\geq 0$, $r_2 > 0$, and let $\delta \in [0,1]$ and
	$\rho \colon \RR^d \to (0,\infty)$ be as in Theorem~\ref{thm:uncertainty}. 
	Then 
	\bes
		\norm{\cT(T)g}_{L^2(\RR^d)}^2 
		\leq 
		N\exp\biggl(\frac{N}{T^{\frac{2r_2}{1-s}}} \biggr)\int_0^T \norm{\cT(t)g}_{L^2(\omega)}^2 \Diff{t}
		,
		\quad 
		g \in L^2(\RR^d)
		,\
		T > 0
		,
	\ees
	for every measurable set $\omega\subset\RR^d$ satisfying \eqref{eq:sensor-set} with some $\gamma \in (0,1)$.
	Here, $N \geq 1$ is a constant depending on $\gamma, R,r_0,\eta,\nu,s,C,r_2$, 
	and the dimension $d$.
\end{corollary}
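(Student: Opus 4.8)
The plan is to deduce the estimate from Theorem~\ref{thm:uncertainty} and the abstract lemma \cite[Lemma~2.1]{Miller-10}, exactly as in the proof of \cite[Theorem~2.11]{Martin-21}. As noted in the text preceding the corollary, for $g\in L^2(\RR^d)$ with $g\neq0$ and $t\in(0,t_0)$ the function $f=\cT(t)g$ belongs to $C^\infty(\RR^d)$ (by \eqref{eq:smoothing} and Sobolev embedding) and satisfies \eqref{eq:smoothing-f} with $D_1=Ct^{-r_1}\norm{g}_{L^2(\RR^d)}>0$ and $D_2=Ct^{-r_2}\geq C\geq1$, the remaining data $\nu,\mu,\delta,s,\rho,\omega,\gamma$ being as prescribed. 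Applying Theorem~\ref{thm:uncertainty} to this $f$ gives, for every $t\in(0,t_0)$ and every $\eps\in(0,1]$,
\bes
	\norm{\cT(t)g}_{L^2(\RR^d)}^2
	\leq
	\euler^{K(1+\log\frac1\eps+C^{4/(1-s)}t^{-4r_2/(1-s)})}\norm{\cT(t)g}_{L^2(\omega)}^2
	+\eps\,\frac{C^2}{t^{2r_1}}\norm{g}_{L^2(\RR^d)}^2 ,
\ees
an uncertainty principle with error for the semigroup.

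It suffices to treat $0<T<t_0$; once this is done, the case $T\geq t_0$ follows from the case $T=t_0/2$ together with the contraction bound $\norm{\cT(T)g}_{L^2(\RR^d)}\leq\norm{\cT(t_0/2)g}_{L^2(\RR^d)}$, after enlarging $N$ to absorb the resulting $T$-independent constant (note $\exp(N/T^{4r_2/(1-s)})\geq1$). For $0<T<t_0$ one chooses the free parameter $\eps$ appropriately and feeds the displayed estimate into \cite[Lemma~2.1]{Miller-10}, applied to the non-increasing function $t\mapsto\norm{\cT(t)g}_{L^2(\RR^d)}^2$ and the weight $t\mapsto\norm{\cT(t)g}_{L^2(\omega)}^2$: that lemma runs a Lebeau--Robbiano-type iteration over a subdivision of $(0,T]$, on each piece invoking the displayed estimate and, by means of the semigroup identity $\cT(t+\tau)=\cT(t)\cT(\tau)$, arranging the error term to point to an earlier (hence larger) value of $\norm{\cT(\cdot)g}_{L^2(\RR^d)}^2$ rather than to $\norm{g}_{L^2(\RR^d)}^2$; telescoping against the monotonicity kills the remainder, and the observation terms are collected into $\int_0^T\norm{\cT(t)g}_{L^2(\omega)}^2\Diff{t}$. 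The outcome is the asserted inequality; this last step is carried out literally as in \cite[Theorem~2.11]{Martin-21}, which is why we omit it.

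The one point requiring care — and the reason for invoking \cite[Lemma~2.1]{Miller-10} rather than iterating by hand — is the convergence of this telescoping together with the tracking of constants. A naive dyadic subdivision of $(0,T]$ does not work: the per-step cost $\exp(cC^{4/(1-s)}(\text{running time})^{-4r_2/(1-s)})$ would grow double-exponentially in the step index and overwhelm the damping, so the subdivision must be chosen with running times that decay only polynomially, which is precisely what makes the series converge with overall blow-up of order $\exp(N/T^{4r_2/(1-s)})$. The remaining term $\log\frac1\eps$ in the exponent is, for the relevant (polynomially small in $t$) choices of $\eps$, of lower order than $C^{4/(1-s)}t^{-4r_2/(1-s)}$, since $\log(1/t)\leq c\,t^{-4r_2/(1-s)}$ for $t\in(0,1]$; in particular the factor $t^{-2r_1}$ coming from $D_1$ enters only through $\log\frac1\eps$, and, together with the polynomial prefactors from the averaging over subintervals, is absorbed into $N$ and into the exponential. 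Consequently $N$ may be taken to depend only on $\gamma,R,r_0,\eta,\nu,s,C,r_2$ — through $K$, through $C^{4/(1-s)}$, and through the exponent $4r_2/(1-s)$ — and on $d$, but not on $r_1$. Finally, the degenerate cases are trivial: if $g=0$ the inequality is clear, and if $\norm{\cT(t)g}_{L^2(\omega)}=0$ on a subinterval then letting $\eps\to0^+$ in the displayed estimate forces $\cT(t)g=0$ there, hence $\cT(T)g=0$.
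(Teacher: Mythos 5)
Your proposal follows exactly the route the paper intends (and omits): identify $D_1=Ct^{-r_1}\norm{g}_{L^2(\RR^d)}$ and $D_2=Ct^{-r_2}$ so that Theorem~\ref{thm:uncertainty} yields an uncertainty principle with error for $f=\cT(t)g$, then choose $\eps$ suitably and apply \cite[Lemma~2.1]{Miller-10} as in the proof of \cite[Theorem~2.11]{Martin-21}. This matches the paper's (omitted) argument, and your bookkeeping of the exponent $4r_2/(1-s)$ and of the parameters entering $N$ is consistent with the statement.
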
 

As shown in \cite[Corollary~2.2]{Alphonse-20b} and \cite[Lemma~5.2]{Martin-21}, the semigroup generated by the fractional
(an-)isotropic Shubin operator $((-\Delta)^m + \abs{x}^{2k})^\theta$ with $k,m\in\NN$ and $\theta > 1/(2m)$ satisfies
\eqref{eq:smoothing} with 
\[
	\nu = \max\Bigl\{ \frac{1}{2k\theta} , \frac{m}{k+m} \Bigr\}
	\quad \text{and} \quad 
	\mu = \max\Bigl\{ \frac{1}{2m\theta} , \frac{k}{k+m} \Bigr\}
	.
\]
Hence, Corollary~\ref{cor:observability} can be applied, which reproduces \cite[Corollary~2.12]{Martin-21}. 
It should however be mentioned that in the general case $\nu = \mu = 1/2$ of
Corollary~\ref{cor:observability} (a particular instance being a Shubin operator with $k=m$ and $\theta=1$) stronger results than
Corollary~\ref{cor:observability} in terms of the conditions on $\omega$ are available, see \cite[Theorem~3.5]{DickeSV-22}.
More precisely, in this case the density $\gamma$ is allowed to be variable and exhibit a certain subexponential decay, so that
$\omega$ may even have finite measure. 
Note that semigroups satisfying \eqref{eq:smoothing} can also be studied using forced symmetrization (of $\mu$ and $\nu$), see \cite{Alphonse-20b}, 
but by this approach one in any case loses special characteristics of the particular operator at hand. 
In the present setting, we are able to obtain a variant of Theorem~\ref{thm:uncertainty} where the density
$\gamma$ is allowed to exhibit a polynomial decay, but the result seems not to be sufficient to give observability as in
Corollary~\ref{cor:observability}, see Theorem~\ref{thm:uncertainty-decay} and Remark~\ref{rem:ucp} below.
If even $k=m=1$, the Shubin operator corresponds to the harmonic oscillator, for which also a sharper observability constant can
be obtained. Indeed, \cite[Theorem~6.1]{DickeSV-22} shows that the observability constant can then be chosen to vanish as
$T\to\infty$. 
We expect that such results hold also for the general Shubin operators.
However, this would require setting up a spectral inequality for these operators, which seems out of reach at the moment

\subsection*{Acknowledgements} The authors are grateful to Ivan Veseli\'c for introducing them to this field of research
and to Christopher Strothmann who referred to the reference \cite{Olver-97} for the asymptotics of the 
series used in the proof of Theorem~\ref{thm:uncertainty}. 
Both authors have been partially supported by the DFG grant VE~253/10-1 entitled \emph{Quantitative unique continuation properties of elliptic PDEs with
variable 2nd order coefficients and applications in control theory, Anderson localization, and photonics}.

%
%

\section{Proof of Theorem~\ref{thm:uncertainty}}

Let $\eps \in (0,1]$, and choose
\be\label{eq:choice-r}
	r
	:=
	\frac{D_2}{\sqrt{\eps/2}}
	\geq
	1
	,
\ee
so that 
\[
	\sup_{x\in \RR^d \setminus B(0,r)} \frac{1}{1+\abs{x}^2} 
	\leq 
	\frac{\eps}{2D_2^2}
	.
\]
Then, \eqref{eq:smoothing-f} with $n = 1$ and $\beta = 0$ implies that
\be\label{eq:compact}
		\norm{f}_{L^2(\RR^d \setminus B(0,r))}^2
		\leq
		\frac{\eps}{2} \cdot \frac{\norm{(1+\abs{x}^2)^{1/2} f}_{L^2(\RR^d)}^2}{D_2^2} 
		\leq
		\frac{\eps D_1^2}{2} 
		.
\ee

We abbreviate $w(x) = w_\delta(x) = (1+\abs{x}^2)^{\delta/2}$. 
If $\delta < 1$, we infer from \cite[Lemma~5.3]{Martin-21} that \eqref{eq:smoothing-f} implies
\be\label{eq:smoothing-f-delta}
	\norm{w^n\partial^\beta f}_{L^2(\RR^d)}
	\leq
	D_1\tilde{D}_2^{n+\abs{\beta}}(n!)^{\delta\nu}(\abs{\beta}!)^\mu
	,
	\quad 
	n\in\NN_0,\beta\in\NN_0^d,
\ee
with $\tilde{D}_2 = 8^\nu\euler^\nu D_2 \geq 1$. If $\delta = 1$, then \eqref{eq:smoothing-f-delta} agrees with
\eqref{eq:smoothing-f} with $\tilde{D}_2 = D_2 \geq 1$. We therefore just work with \eqref{eq:smoothing-f-delta} for the
remaining part.

The proof of the theorem now follows the following lines: 
Inspired by \cite{EgidiS-21}, the estimates \eqref{eq:smoothing-f-delta} imply, in particular, that $f$ is analytic, see
Lemma~\ref{lem:analyticity}.
Moreover, by \eqref{eq:compact} the contribution of $f$ outside of the ball $B(0,r)$ can be subsumed into the error term. 
The ball $B(0,r)$ is then covered with balls of the form $B(x,\rho(x))$ by Besicovitch covering theorem. Based on
\eqref{eq:smoothing-f-delta} and following \cite{Kovrijkine-01,Kovrijkine-thesis} and \cite{Martin-21}, these balls are
classified into good and bad ones, where on good balls local Bernstein-type estimates are available and the contribution of bad
balls can again be subsumed into the error term, see Lemma~\ref{lem:good-bad}.
Following again \cite{Kovrijkine-01,Kovrijkine-thesis}, on good balls the local Bernstein-type estimates allow to bound suitable
Taylor expansions of $f$, which by analyticity of $f$ lead to local estimates of the desired form, see
Lemmas~\ref{lem:pointwise}--\ref{lem:M_k}.
Summing over all good balls finally concludes the proof.

We consider balls $B(x,\rho(x))$ with $B(x,\rho(x))\cap B(0,r) \neq \emptyset$. The latter requires that
$\abs{x}-\rho(x) < r$ and, thus, $\abs{x} < r_0$ or $(1-\eta)\abs{x} \leq \abs{x} - \rho(x) < r$, that is,
$\abs{x} < r/(1-\eta)$. By Besicovitch covering theorem, see, e.g., \cite[Theorem~2.7]{Mattila-99}, there is $\cK_0 \subset \NN$
and a collection of points $(y_k)_{k \in \cK_0}$ with $\abs{ y_k } < \max\{ r_0 , r/(1-\eta) \}$ such that the family of balls
$Q_k = B(y_k,\rho(y_k))$, $k \in \cK_0$, gives an essential covering of $B(0,\max\{ r_0 , r/(1-\eta) \})$ with overlap at most
$\kappa \geq 1$. Here, the proof in \cite[Theorem~2.7]{Mattila-99} and a simple calculation shows that $\kappa$ can be chosen as
$\kappa = K_{\mathrm{Bes}}^d$ with a universal constant $K_{\mathrm{Bes}} \geq 1$. With
$Q_0 := \RR^d \setminus \bigcup_{k \in\cK_0} Q_k$ and $\cK := \cK_0 \cup \{0\}$, the family $(Q_k)_{k \in \cK}$ thus gives an
essential covering of $\RR^d$ with overlap at most $\kappa = K_{\mathrm{Bes}}^d \geq 1$.

\subsection{Good and bad balls}

Similarly as in \cite{Martin-21}, we now define the so-called good elements of the covering. We do this in such a way that we
have some localized Bernstein-type inequality on all good elements. More precisely, we say that $Q_k$, $k \in \cK_0$, is
\emph{good} with respect to $f$ if for all $m \in \NN_0$ we have 
\[
	\sum_{\abs{\beta}=m} \frac{1}{\beta!}\norm{w^m \partial^\beta f}_{L^2(Q_k)}^2
	\leq
	\frac{2\kappa}{\eps} \cdot \frac{2^{m+1} d^m q_m^2}{m!} \norm{f}_{L^2(Q_k)}^2
	,
\]
where
\[
	q_m 
	= 
	\tilde{D}_2^{2m}(m!)^{\delta\nu+\mu}
	=
	\tilde{D}_2^{2m}(m!)^s
	.
\]
We call $Q_k$, $k \in \cK_0$, \emph{bad} if it is not good.

Although we can not show that the mass of $f$ on the good balls covers some fixed fraction of the mass of $f$ on the whole of
$\RR^d$, inequality \eqref{eq:smoothing-f-delta} nevertheless implies that the mass of $f$ on the bad balls is bounded by
$\eps D_1^2/2$. Hence, the contribution of the bad elements can likewise be subsumed into the error term. This is summarized in
the following result.

\begin{lemma}\label{lem:good-bad}
	We have
	\[
		\norm{f}_{L^2(\RR^d)}^2
		\leq
		\norm{f}_{L^2(\bigcup_{k \colon Q_k\, \mathrm{good}} Q_k)}^2 + \eps D_1^2
		.
	\]
\end{lemma}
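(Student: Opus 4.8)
The plan is to bound the $L^2$-mass of $f$ on the bad balls and on $Q_0$ separately, each by $\eps D_1^2/2$, so that their sum is $\eps D_1^2$ and the claimed inequality follows from the essential covering property combined with the overlap bound. First I would handle $Q_0 = \RR^d \setminus \bigcup_{k\in\cK_0} Q_k$. By construction the balls $Q_k$, $k \in \cK_0$, cover $B(0,\max\{r_0, r/(1-\eta)\}) \supset B(0,r)$ up to a null set, so $\norm{f}_{L^2(Q_0)}^2 \leq \norm{f}_{L^2(\RR^d \setminus B(0,r))}^2 \leq \eps D_1^2/2$ by \eqref{eq:compact}. This takes care of the index $0$.

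Next I would estimate the mass on the bad balls. The key observation is that if $Q_k$ is bad, then there exists some $m \in \NN_0$ for which the defining inequality of goodness fails, i.e.
\[
	\norm{f}_{L^2(Q_k)}^2
	<
	\frac{\eps}{2\kappa} \cdot \frac{m!}{2^{m+1} d^m q_m^2}
	\sum_{\abs{\beta}=m} \frac{1}{\beta!}\norm{w^m \partial^\beta f}_{L^2(Q_k)}^2
	.
\]
Summing this over all bad $k$ and using the overlap bound $\kappa$ of the covering $(Q_k)_{k\in\cK}$ to pass from $\sum_k \norm{\,\cdot\,}_{L^2(Q_k)}^2$ to $\kappa\norm{\,\cdot\,}_{L^2(\RR^d)}^2$, I would obtain, for each fixed $m$ that witnesses badness, a contribution controlled by $\frac{\eps}{2\kappa} \cdot \frac{m!}{2^{m+1} d^m q_m^2} \cdot \kappa \sum_{\abs{\beta}=m} \frac{1}{\beta!}\norm{w^m \partial^\beta f}_{L^2(\RR^d)}^2$. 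Since a given bad ball may only witness badness for one value of $m$ (or we simply sum over all $m$ as an upper bound), summing over $m \in \NN_0$ as well gives
\[
	\sum_{k \colon Q_k\, \mathrm{bad}} \norm{f}_{L^2(Q_k)}^2
	\leq
	\frac{\eps}{2} \sum_{m=0}^\infty \frac{m!}{2^{m+1} d^m q_m^2}
	\sum_{\abs{\beta}=m} \frac{1}{\beta!}\norm{w^m \partial^\beta f}_{L^2(\RR^d)}^2
	.
\]
Now I invoke \eqref{eq:smoothing-f-delta}: $\norm{w^m\partial^\beta f}_{L^2(\RR^d)}^2 \leq D_1^2 \tilde D_2^{2(m+\abs{\beta})}(m!)^{2\delta\nu}(\abs{\beta}!)^{2\mu}$. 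Using the multinomial identity $\sum_{\abs{\beta}=m}\frac{m!}{\beta!} = d^m$ and $\abs{\beta}! / \beta! \le m!/\beta!$ summed appropriately (more precisely $\sum_{\abs{\beta}=m}\frac{(\abs{\beta}!)^{2\mu}}{\beta!} \le \sum_{\abs{\beta}=m}\frac{(m!)^{2\mu}}{\beta!}\cdot\frac{1}{m!}\cdot m! $, bounded by $(m!)^{2\mu-1} d^m$ since $\mu < 1$ gives $(m!)^{2\mu} \le (m!)^{2} $ crudely, but one wants the sharper $\sum \frac{m!}{\beta!} = d^m$), each $m$-term becomes at most $\frac{m!}{2^{m+1}d^m} \cdot \frac{D_1^2 \tilde D_2^{4m}(m!)^{2\delta\nu}}{\tilde D_2^{4m}(m!)^{2s}} \cdot (m!)^{2\mu - 1} d^m = \frac{D_1^2}{2^{m+1}}$, where the powers of $\tilde D_2$ and of $m!$ cancel exactly because $q_m = \tilde D_2^{2m}(m!)^s$ with $s = \delta\nu + \mu$. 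Summing the geometric series $\sum_m 2^{-m-1} = 1$ yields the bound $\eps D_1^2/2$.

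The main obstacle, and the step requiring genuine care, is the bookkeeping in the second paragraph: making the combinatorial sum $\sum_{\abs{\beta}=m}\frac{1}{\beta!}(\abs{\beta}!)^{2\mu}$ collapse cleanly against the factor $\frac{m!}{2^{m+1}d^m q_m^2}$, which is exactly where the precise form of $q_m$ — and the constraint $s = \delta\nu + \mu < 1$ — is used, and in correctly accounting for the overlap $\kappa$ (the factor $2\kappa/\eps$ in the definition of "good" is tuned so that the $\kappa$'s cancel). One must also be slightly careful that each bad ball is counted against only the single exponent $m$ that witnesses its badness, or else argue that summing over all $m$ is a harmless over-estimate — the latter is cleaner and is what the geometric series is designed to absorb. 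Everything else is routine: combine the two bounds, $\norm{f}_{L^2(\RR^d)}^2 \le \sum_{k\in\cK}\norm{f}_{L^2(Q_k)}^2 \le \norm{f}_{L^2(\bigcup_{\mathrm{good}}Q_k)}^2 + \eps D_1^2/2 + \eps D_1^2/2$, using the essential covering property (here I only need that the $Q_k$ cover $\RR^d$ up to null sets, so $\norm{f}_{L^2(\RR^d)}^2 \le \sum_k \norm{f}_{L^2(Q_k)}^2$ even without invoking the overlap bound, which is reserved for the bad-ball sum).
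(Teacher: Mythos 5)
Your treatment of $Q_0$ and your computation for the bad-ball sum are both correct and follow essentially the same lines as the paper (the exponents of $m!$ and powers of $\tilde D_2$ cancel exactly against $q_m^2$, $\sum_{\abs{\beta}=m}1/\beta! = d^m/m!$, and the geometric series $\sum_m 2^{-m-1}=1$ absorbs summing over all $m$). Two small remarks there: since $\abs{\beta}=m$ is fixed, $(\abs{\beta}!)^{2\mu}=(m!)^{2\mu}$ is constant and the identity $\sum_{\abs{\beta}=m}(\abs{\beta}!)^{2\mu}/\beta! = (m!)^{2\mu-1}d^m$ is exact — no crude estimate or use of $\mu<1$ is needed; and in fact $s<1$ plays no role in this lemma, only later in bounding $M_k$.

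The genuine problem is in your final assembly. You write
\[
\norm{f}_{L^2(\RR^d)}^2 \le \sum_{k\in\cK}\norm{f}_{L^2(Q_k)}^2 \le \norm{f}_{L^2(\bigcup_{\mathrm{good}}Q_k)}^2 + \tfrac{\eps D_1^2}{2} + \tfrac{\eps D_1^2}{2},
\]
but the second inequality would require $\sum_{k:\,Q_k\,\mathrm{good}}\norm{f}_{L^2(Q_k)}^2 \le \norm{f}_{L^2(\bigcup_{\mathrm{good}}Q_k)}^2$, which is the \emph{reverse} of the true inequality: with overlap up to $\kappa$, the sum over the good balls can be as large as $\kappa$ times the norm over their union. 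Going through $\sum_{k\in\cK}\norm{f}_{L^2(Q_k)}^2$ therefore loses control of the good part exactly because you want it on the \emph{smaller} side of the final estimate. The correct route is to decompose the \emph{domain} rather than the sum: since (up to a null set) $\RR^d = \bigl(\bigcup_{\mathrm{good}}Q_k\bigr)\cup\bigl(\bigcup_{\mathrm{bad}}Q_k\bigr)\cup Q_0$, subadditivity of the integral over a union gives
\[
\norm{f}_{L^2(\RR^d)}^2 \le \norm{f}_{L^2(\bigcup_{\mathrm{good}}Q_k)}^2 + \norm{f}_{L^2(\bigcup_{\mathrm{bad}}Q_k)}^2 + \norm{f}_{L^2(Q_0)}^2,
\]
and only \emph{then} one bounds $\norm{f}_{L^2(\bigcup_{\mathrm{bad}}Q_k)}^2 \le \sum_{k:\,Q_k\,\mathrm{bad}}\norm{f}_{L^2(Q_k)}^2$, where passing to the sum is harmless because this term is being estimated from above. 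Your bad-ball and $Q_0$ bounds then slot in directly, and the lemma follows.
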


\begin{proof}
	Since 
	\[
		\norm{f}_{L^2(\RR^d)}^2 
		\leq
		\norm{f}_{L^2(\bigcup_{k \colon Q_k\, \mathrm{good}} Q_k)}^2 
		+ 
		\norm{f}_{L^2(\bigcup_{k \colon Q_k\, \mathrm{bad}} Q_k)}^2
		+
		\norm{f}_{L^2(Q_0)}^2
		,
	\]
	it suffices to show that
	\be\label{eq:bad-contribution}
		\sum_{k \colon Q_k\, \mathrm{bad}} \norm{f}_{L^2(Q_k)}^2 + \norm{f}_{L^2(Q_0)}^2
		\leq
		\eps D_1^2
		.
	\ee
	To this end, we first note that $Q_0 \subset \RR^d \setminus B(0,r)$ and, thus,
	$\norm{f}_{L^2(Q_0)}^2 \leq \eps D_1^2/2$ by estimate \eqref{eq:compact}.
	Let now $Q_k$, $k \in \cK_0$, be bad, that is, there is $m \in \NN_0$ such that
	\eqs{
		\norm{f}_{L^2(Q_k)}^2
		&\leq
		\frac{\eps}{2\kappa} \cdot \frac{m!}{2^{m+1} d^m} \sum_{\abs{\beta}=m} \frac{1}{\beta!}
			\Bigl( \frac{\norm{w^m \partial^\beta f}_{L^2(Q_k)}}{q_m} \Bigr)^2\\
		&\leq
		\frac{\eps}{2\kappa} \sum_{m=0}^\infty \frac{m!}{2^{m+1} d^m} \sum_{\abs{\beta}=m} \frac{1}{\beta!}
			\Bigl( \frac{\norm{w^m \partial^\beta f}_{L^2(Q_k)}}{q_m} \Bigr)^2
		.
	}
	Summing over all bad $Q_k$ with $k\in\cK_0$
	and using \eqref{eq:smoothing-f-delta} then gives
	\eqs{
		\sum_{\substack{k\in \cK_0\\Q_k\, \mathrm{bad}}} \norm{f}_{L^2(Q_k)}^2
		\leq
		\frac{\eps}{2} \cdot D_1^2  \sum_{m=0}^\infty \frac{m!}{2^{m+1} d^m} \sum_{\abs{\beta}=m} \frac{1}{\beta!}
		=
		\frac{\eps}{2} \cdot D_1^2  \sum_{m=0}^\infty \frac{1}{2^{m+1}}
		=
		\frac{\eps}{2} D_1^2
		,
	}
	where we used that $\sum_{\abs{\beta}=m} 1/\beta! = d^m/m!$. This proves \eqref{eq:bad-contribution}.
\end{proof}%

Now, as in \cite{EgidiS-21}, see also \cite{Kovrijkine-01,Kovrijkine-thesis,EgidiV-20}, we use the definition of good elements to
extract a pointwise estimate for the derivatives of $f$.

\begin{lemma}\label{lem:pointwise}
	Let $Q_k$ be good. Then there is $x_k \in Q_k$ such that for all $\beta \in \NN_0^d$ with $\abs{\beta} = m \in \NN_0$ we have
	\be\label{eq:pointwise}
		\abs{\partial^\beta f(x_k)}
			\leq
		\Bigl( \frac{2\kappa}{\eps} \Bigr)^{1/2} \cdot 2^{m+1} d^{m/2} \cdot C(k,m)^{1/2} \cdot
			\frac{\norm{f}_{L^2(Q_k)}}{\sqrt{\abs{Q_k}}}
	\ee
	with
	\be\label{lem:good-const}
		C(k,m)
		=
		q_m^2\sup_{x \in Q_k} w(x)^{-2m}
		.
	\ee
\end{lemma}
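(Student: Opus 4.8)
The plan is to derive the pointwise bound \eqref{eq:pointwise} from the defining inequality of a good ball by a standard mean-value/pigeonhole argument, combined with a Sobolev-type embedding on the ball $Q_k$. The starting point is that, for a good ball $Q_k$ and every $m \in \NN_0$,
\bes
	\sum_{\abs{\beta}=m} \frac{1}{\beta!}\norm{w^m \partial^\beta f}_{L^2(Q_k)}^2
	\leq
	\frac{2\kappa}{\eps}\cdot\frac{2^{m+1}d^m q_m^2}{m!}\norm{f}_{L^2(Q_k)}^2.
\ees
Since $w(x)^{-m} \leq \sup_{x\in Q_k} w(x)^{-m} = (C(k,m)/q_m^2)^{1/2}\cdot q_m \cdot q_m^{-1}$, i.e. $w(x)^{-2m}\leq C(k,m)/q_m^2$, multiplying and dividing by $w^m$ inside each norm and pulling out this supremum converts the above into a bound on $\sum_{\abs\beta=m}\frac{1}{\beta!}\norm{\partial^\beta f}_{L^2(Q_k)}^2$ in terms of $C(k,m)$ rather than $q_m^2$. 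Concretely one gets
\bes
	\sum_{\abs{\beta}=m} \frac{1}{\beta!}\norm{\partial^\beta f}_{L^2(Q_k)}^2
	\leq
	\frac{2\kappa}{\eps}\cdot\frac{2^{m+1}d^m}{m!}\,C(k,m)\,\norm{f}_{L^2(Q_k)}^2.
\ees

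Next I would pass from this $L^2$ control of all derivatives of order $m$ to a pointwise bound. The cleanest route: since $f$ is analytic on $\RR^d$ (by Lemma~\ref{lem:analyticity}, which the proof text already invokes), the derivatives are continuous, so for each fixed multi-index $\beta$ there is a point where $\abs{\partial^\beta f}^2$ is at most its average over $Q_k$, namely $\le \norm{\partial^\beta f}_{L^2(Q_k)}^2/\abs{Q_k}$. To get a single point $x_k$ that works simultaneously for all $\beta$, one applies the usual measure-theoretic trick: the set where $\sum_{\abs\beta=m}\frac{1}{\beta!}\abs{\partial^\beta f(x)}^2$ exceeds twice its $Q_k$-average (summed appropriately over $m$, or handled level by level with a geometric weight) has measure less than $\abs{Q_k}$, hence its complement is nonempty; pick $x_k$ there. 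At such an $x_k$, for every $\beta$ with $\abs\beta = m$,
\bes
	\frac{1}{\beta!}\abs{\partial^\beta f(x_k)}^2
	\leq
	\sum_{\abs{\beta'}=m}\frac{1}{\beta'!}\abs{\partial^{\beta'} f(x_k)}^2
	\leq
	\frac{2}{\abs{Q_k}}\sum_{\abs{\beta'}=m}\frac{1}{\beta'!}\norm{\partial^{\beta'} f}_{L^2(Q_k)}^2
	\leq
	\frac{2}{\abs{Q_k}}\cdot\frac{2\kappa}{\eps}\cdot\frac{2^{m+1}d^m}{m!}\,C(k,m)\,\norm{f}_{L^2(Q_k)}^2.
\ees
Multiplying by $\beta! \le m!$ and taking square roots yields $\abs{\partial^\beta f(x_k)} \le (2\kappa/\eps)^{1/2}\cdot 2^{(m+2)/2}\cdot d^{m/2}\cdot C(k,m)^{1/2}\cdot\norm{f}_{L^2(Q_k)}/\sqrt{\abs{Q_k}}$, and absorbing the numerical factor $2^{(m+2)/2}\le 2^{m+1}$ gives exactly \eqref{eq:pointwise}.

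The main obstacle is the simultaneous choice of $x_k$: one must arrange a single point at which the pointwise bound holds for \emph{all} $m$ at once, which requires combining the family of level-$m$ estimates with summable weights (e.g. weight $2^{-m-1}$) so that the "bad set" over all $m$ still has measure strictly less than $\abs{Q_k}$; the weights must then be tracked so that the final constant only inflates the $2^{m}$-type factor by a bounded amount, not by something growing faster in $m$. A secondary point requiring care is the appearance of $\beta!$ versus $m!$: one uses $\beta! \le m!$ for $\abs\beta = m$, which is exactly the inequality that makes the $1/m!$ on the good-ball side cancel against the $\beta!$ when isolating a single $\partial^\beta f$. Everything else — the manipulation of $w$, the continuity of derivatives, and collecting powers of $2$ and $d$ — is routine.
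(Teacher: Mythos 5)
Your proposal is essentially the paper's argument: a Chebyshev/pigeonhole selection of $x_k$ combined with pulling out $\sup_{Q_k} w^{-2m}$ from the goodness condition. The paper packages this as a proof by contradiction (``assume for all $x$ there is an $m(x)$ exceeding a threshold; sum over $m$ and integrate''), but that is the same averaging argument. One point of bookkeeping, which you flag yourself as ``the main obstacle'' but then get slightly wrong in the aside: selecting a single $x_k$ valid for \emph{all} $m$ requires that the ``bad set'' at level $m$ have measure $< 2^{-m-1}\abs{Q_k}$, which by Chebyshev forces the threshold at level $m$ to carry a factor $2^{m+1}$, not the factor $2$ appearing in your displayed inequality. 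That turns your intermediate bound $2^{m+1}d^m$ into $4^{m+1}d^m$ (exactly the paper's intermediate expression), and after the square root you land precisely on the factor $2^{m+1}$ in \eqref{eq:pointwise} --- not on $2^{(m+2)/2}$. So the inflation from the geometric weights is by $\sim 2^{m/2}$, not ``bounded''; it just happens to be exactly the inflation the stated constant accommodates. Finally, the ``Sobolev-type embedding on the ball $Q_k$'' mentioned at the start is a red herring: the averaging argument uses no Sobolev embedding; analyticity of $f$ (hence continuity of derivatives) suffices, and even that only to make pointwise values well defined.
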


\begin{proof}
	Assume that for all $x \in Q_k$ there is $m = m(x) \in \NN_0$ such that
	\[
		\sum_{\abs{\beta}=m} \frac{1}{\beta!}\abs{\partial^\beta f(x)}^2
		>
		\frac{2\kappa}{\eps} \cdot \frac{4^{m+1}d^m}{m!} \cdot C(k,m) \cdot \frac{\norm{f}_{L^2(Q_k)}^2}{\abs{Q_k}}
		.
	\]
	Reordering the terms and summing over all $m \in \NN_0$ in order to get rid of the $x$-dependence, we then obtain
	\be\label{eq:derivatives}
		\frac{\norm{f}_{L^2(Q_k)}^2}{\abs{Q_k}}
		<
		\frac{\eps}{2\kappa} \sum_{m=0}^\infty \frac{m!}{4^{m+1}d^m C(k,m)} \sum_{\abs{\beta}=m} \frac{1}{\beta!}
			\abs{\partial^\beta f(x)}^2
	\ee
	for all $x \in Q_k$. 
	We observe that
	\bes
		\norm{\partial^\beta f}_{L^2(Q_k)}^2
		=
		\norm{w(\cdot)^{-m}w^m\partial^\beta f}_{L^2(Q_k)}^2
		\leq
		\sup_{x \in Q_k} w(x)^{-2m} \cdot \norm{w^m\partial^\beta f}_{L^2(Q_k)}^2
		.
	\ees
	Thus, integrating \eqref{eq:derivatives} over $x \in Q_k$ and using that $Q_k$ is good gives
	\eqs{
		\norm{f}_{L^2(Q_k)}^2
		&<
		\frac{\eps}{2\kappa} \sum_{m=0}^\infty \frac{m!}{4^{m+1}d^m C(k,m)} \sum_{\abs{\beta}=m} \frac{1}{\beta!}
			\norm{\partial^\beta f}_{L^2(Q_k)}^2\\
		&\leq
		\norm{f}_{L^2(Q_k)}^2 \sum_{m=0}^\infty 2^{-m-1}
			=
			\norm{f}_{L^2(Q_k)}^2
		,
	}
	leading to a contradiction. 
	Hence, there is $x_k \in Q_k$ such that for all $\beta \in \NN_0^d$ with $\abs{\beta} = m$ we have
	\[
		\abs{\partial^\beta f(x_k)}^2
		\leq
		\sum_{\abs{\beta}=m} \frac{m!}{\beta!} \abs{\partial^\beta f(x_k)}^2
		\leq
		\frac{2\kappa}{\eps} \cdot 4^{m+1}d^m \cdot C(k,m) \cdot \frac{\norm{f}_{L^2(Q_k)}^2}{\abs{Q_k}}
		,
	\]
	and taking square roots proves the claim.
\end{proof}%

\subsection{The local estimate}

In order to estimate $f$ on each (good) $Q_k$, $k \in \cK_0$, we use a complex analytic local estimate that goes back to
\cite{Nazarov-94,Kovrijkine-01,Kovrijkine-thesis}. It has later been used in \cite{EgidiS-21} and implicitly also in
\cite{EgidiV-20,WangWZZ-19,BeauchardJPS-21,MartinPS-20-arxiv}. We rely here on a particular case of the formulation
in \cite{EgidiS-21}.

\begin{lemma}[{see~\cite[Lemma~3.5]{EgidiS-21}}]\label{lem:local}
	Let $k\in\cK_0$, and suppose that the function $f\restriction_{Q_k} \colon Q_k \to \CC$ has an analytic extension
	$F \colon Q_k + D_{8\rho(x_k)} \to \CC$ with bounded modulus, where
	$D_{8\rho(x_k)} = D(0,8\rho(x_k))\times\dots\times D(0,8\rho(x_k))\subset\CC^d$	is the complex polydisc of radius $8\rho(x_k)$.
	
	Then, for every measurable set $\omega \subset \RR^d$ with $\abs{Q_k\cap\omega} > 0$ we have
	\[
		\Bigl( 24d2^d\cdot \frac{\abs{Q_k}}{\abs{Q_k\cap\omega}}\Bigr)^{1+4\log M_k/\log 2}
		\norm{f}_{L^2(Q_k \cap \omega)}^2
		\geq
		\norm{f}_{L^2(Q_k)}^2
	\]
	with
	\[
		M_k := \frac{\sqrt{\abs{Q_k}}}{\norm{f}_{L^2(Q_k)}} 
		\cdot \sup_{z \in Q_k + D_{8\rho(x_k)}} \abs{F(z)} \ge 1.
	\]
\end{lemma}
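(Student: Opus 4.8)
The plan is to prove Lemma~\ref{lem:local} by applying the one-dimensional complex-analytic estimate from \cite{Kovrijkine-01,Kovrijkine-thesis} along a suitable family of complex lines through the cube $Q_k$ and then integrating. First I would reduce to a normalized situation: after an affine change of variables we may assume $Q_k = (-1,1)^d$ (or a cube of comparable size), and we may rescale so that $\norm{f}_{L^2(Q_k)} = 1$; the hypothesis then says $F$ is analytic and bounded on a polydisc neighbourhood $Q_k + D_{8\rho(x_k)}$ of $Q_k$ with $\sup\abs{F} = M_k$. The key point to extract is Kovrijkine's local inequality for a single variable: if $g$ is analytic and bounded on a disc $D(I, 4\ell)$ around an interval $I$ of length $2\ell$, with $\sup_{D(I,4\ell)}\abs{g}/\sup_I\abs{g} \leq M$, then for any measurable $E \subset I$ one has $\sup_I \abs{g} \leq (C\abs{I}/\abs{E})^{c\log M}\sup_E\abs{g}$, or the $L^2$-variant $\int_I \abs{g}^2 \leq (C\abs{I}/\abs{E})^{1+c\log M/\log 2}\int_E\abs{g}^2$. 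This is exactly the content cited in \cite[Lemma~3.5]{EgidiS-21}, and if we are entitled to cite that lemma directly then the proof is essentially an invocation.

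Assuming we instead reprove it: the main steps are, first, a Fubini/slicing argument. Write $x = (x', x_d)$ and consider the slices $\omega_{x'} = \{x_d : (x',x_d)\in\omega\}$. By Fubini, $\abs{Q_k\cap\omega} = \int \abs{\omega_{x'}}\,\Diff{x'}$, so on a set of $x'$ of positive measure we have $\abs{\omega_{x'}}$ comparable to $\abs{Q_k\cap\omega}/\abs{Q_k}$ times the slice length. Second, for each such $x'$ the function $x_d \mapsto F(x', x_d)$ extends analytically to the disc $D(0,8\rho(x_k)) \supset D(0,4\cdot 2\rho(x_k))$ and is bounded there by $M_k$ (after normalization), so the one-dimensional inequality gives $\norm{f(x',\cdot)}_{L^2(\text{slice})}^2 \leq (C d 2^d \abs{Q_k}/\abs{Q_k\cap\omega})^{1+4\log M_k/\log 2}\norm{f(x',\cdot)}_{L^2(\omega_{x'})}^2$. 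Third, integrate this in $x'$ and control the coupling between the exponent and the $x'$-dependent quantities — here one uses that the bad set of $x'$ (where $\abs{\omega_{x'}}$ is too small relative to the average) carries at most half the $L^2$-mass of $f$ on $Q_k$, a pigeonhole step, so that the good $x'$ already account for, say, half of $\norm{f}_{L^2(Q_k)}^2$, and one absorbs the factor $2$ into the constant. Summing/iterating over the $d$ coordinate directions produces the stated constant $24 d 2^d$.

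The main obstacle I anticipate is bookkeeping the passage from the one-dimensional estimate to the full $d$-dimensional cube: the subtlety is that the "good slices" in the $x_d$-direction need not be good in the other directions, so one cannot simply apply the slicing argument $d$ times naively. The standard fix (as in \cite{Kovrijkine-thesis}) is to do it one direction at a time, each time splitting the domain of the remaining variables into a good part (carrying most of the mass) and a bad part (absorbed via a geometric series with ratio $1/2$), which is exactly the mechanism already used in Lemma~\ref{lem:good-bad} and Lemma~\ref{lem:pointwise} of this paper. A second, more technical point is ensuring the polydisc radius $8\rho(x_k)$ is large enough: on the normalized cube of sidelength $\sim\rho(x_k)$ we need the analytic extension to reach a disc of radius at least $4$ times the half-sidelength in each variable while staying in $Q_k + D_{8\rho(x_k)}$, and the constant $8 = 2\cdot 4$ is chosen precisely so this holds with room to spare. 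Since \cite[Lemma~3.5]{EgidiS-21} is stated in exactly the form we need, the cleanest route — and the one I would actually take in the paper — is to simply cite it, checking only that our normalization of $Q_k$, $x_k$, and the polydisc matches the hypotheses there, and to relegate the reproof sketched above to a remark or omit it.
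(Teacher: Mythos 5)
The paper offers no proof of this lemma; as the bracketed \emph{see} in the header indicates, it is imported as-is from \cite[Lemma~3.5]{EgidiS-21}, so your closing recommendation to cite it directly is exactly what the paper does. Your reproof sketch of the underlying Kovrijkine slicing mechanism is plausible but not required, and if carried out you would have to adjust for the fact that here the $Q_k$ are Euclidean balls rather than cubes and that the polydisc radius is $8\rho(x_k)$ with $x_k$ the distinguished point from Lemma~\ref{lem:pointwise}, not the radius $\rho(y_k)$ of the ball itself.
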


On good $Q_k$, the hypotheses of Lemma~\ref{lem:local} are indeed satisfied, and the pointwise estimates \eqref{eq:pointwise} can
be used to obtain a suitable upper bound for the quantity $M_k$.

\begin{lemma}\label{lem:M_k}
	Let $Q_k$ be good.
	Then, the restriction $f\restriction_{Q_k}$ has an analytic extension 
	$F_k \colon Q_k + D_{8\rho(x_k)} \to \CC$, and $M_k$ as in Lemma~\ref{lem:local} satisfies
	\[
		\log M_k
		\leq
		\log (2K') + \frac{1}{2}\log\Bigl( \frac{2\kappa}{\eps} \Bigr) + D^{1/(1-s)}
		,
	\]
	where $K' \geq 1$ is a constant depending only on $s$ and
	\[
		D
		=
		40d^{3/2}\tilde{D}_2^2 R\max\{ r_0 , (1-\eta)^{-1} \}
		.
	\]
\end{lemma}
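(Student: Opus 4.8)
The plan is to carry out the complex-analytic step of Kovrijkine's scheme on a good $Q_k$. Let $x_k \in Q_k$ be the point provided by Lemma~\ref{lem:pointwise}, put $\ell := 2\rho(y_k) + 8\rho(x_k)$, and consider the Taylor expansion of $f$ about $x_k$,
\[
	F_k(z) := \sum_{\beta \in \NN_0^d} \frac{\partial^\beta f(x_k)}{\beta!}(z - x_k)^\beta .
\]
For $z \in Q_k + D_{8\rho(x_k)}$, writing $z = q + \zeta$ with $q \in Q_k$ and $\zeta \in D_{8\rho(x_k)}$, one has $\abs{z_j - (x_k)_j} \le \mathrm{diam}(Q_k) + 8\rho(x_k) = \ell$ for every coordinate $j$; combining \eqref{eq:pointwise} with $\sum_{\abs{\beta}=m} 1/\beta! = d^m/m!$, the definition of $q_m$, and $\sup_{x \in Q_k} w(x)^{-m} = \bigl(\sup_{x\in Q_k}w(x)^{-1}\bigr)^m$, this gives
\[
	\abs{F_k(z)}
	\le
	2 \Bigl(\frac{2\kappa}{\eps}\Bigr)^{1/2} \frac{\norm{f}_{L^2(Q_k)}}{\sqrt{\abs{Q_k}}} \sum_{m=0}^\infty \theta_k^m (m!)^{s-1} ,
	\qquad
	\theta_k := 2 d^{3/2} \tilde{D}_2^2\, \ell \sup_{x\in Q_k} w(x)^{-1} .
\]
Since $s < 1$, the factor $(m!)^{s-1}$ decays factorially, so this series converges (as does the one obtained by enlarging $\ell$) and hence $F_k$ is entire. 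Moreover $F_k = f$ on $Q_k$, because $f$ is analytic by Lemma~\ref{lem:analyticity} and therefore equals its Taylor series in a neighbourhood of $x_k$, the identity theorem propagating this over the connected set $Q_k$. Thus $F_k$ provides the analytic extension $Q_k + D_{8\rho(x_k)} \to \CC$ of $f\restriction_{Q_k}$ with bounded modulus required in Lemma~\ref{lem:local}.

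To estimate $M_k$, I bound the series using the elementary inequality $u^m/m! \le \euler^u$ for $u \ge 0$: since
\[
	\theta_k^m (m!)^{s-1}
	=
	2^{-m} \biggl[ \frac{\bigl( (2\theta_k)^{1/(1-s)} \bigr)^m}{m!} \biggr]^{1-s}
	\le
	2^{-m} \euler^{(2\theta_k)^{1/(1-s)}} ,
\]
summing over $m$ yields $\sum_{m\ge 0} \theta_k^m (m!)^{s-1} \le 2\euler^{(2\theta_k)^{1/(1-s)}}$, and therefore
\[
	M_k
	=
	\frac{\sqrt{\abs{Q_k}}}{\norm{f}_{L^2(Q_k)}} \sup_{z \in Q_k + D_{8\rho(x_k)}} \abs{F_k(z)}
	\le
	4 \Bigl(\frac{2\kappa}{\eps}\Bigr)^{1/2} \euler^{(2\theta_k)^{1/(1-s)}} ,
\]
that is, $\log M_k \le \log 4 + \tfrac{1}{2} \log(2\kappa/\eps) + (2\theta_k)^{1/(1-s)}$. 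It therefore remains to check that $2\theta_k \le (2D)^2$.

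Bounding $\theta_k$ — equivalently the quantity $\ell\cdot\sup_{x\in Q_k} w(x)^{-1} = \bigl(2\rho(y_k) + 8\rho(x_k)\bigr)(1+d_k^2)^{-\delta/2}$, where $d_k := \inf_{x\in Q_k}\abs{x}$ — is the substantive step and the main obstacle. The point is that the polydisc radius in Lemma~\ref{lem:local} is governed by $\rho(x_k)$ rather than by $\rho(y_k)$, and that the $\eps$-dependence hidden in $\abs{y_k} < \max\{r_0, r/(1-\eta)\}$ (coming from the Besicovitch covering of $B(0,\max\{r_0, r/(1-\eta)\})$) must be neutralized by the decay factor $(1+d_k^2)^{-\delta/2}$; this is precisely the reason for using \eqref{eq:smoothing-f-delta}, carrying the weights $w^n$, rather than just \eqref{eq:smoothing-f}. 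I would distinguish two cases. If $\abs{y_k} \ge r_0$, then \eqref{eq:smallness} gives $\rho(y_k) \le \eta\abs{y_k}$, $d_k \ge (1-\eta)\abs{y_k}$ and $\abs{x_k} \le (1+\eta)\abs{y_k}$, so that $\rho(\cdot) \le R(1+\abs{\cdot}^2)^{\delta/2}$ together with $\abs{y_k} \ge r_0 \ge 1$ yields $\rho(y_k)+\rho(x_k) \le c\,R(1-\eta)^{-\delta}(1+d_k^2)^{\delta/2}$ and hence $\ell\cdot\sup_{x\in Q_k} w(x)^{-1} \le c'\,R(1-\eta)^{-1}$ with absolute constants $c, c'$. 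If $\abs{y_k} < r_0$, then $\abs{x_k} \le \abs{y_k} + \rho(y_k) \le c''\,R r_0$, whence $\ell \le c'''\,R^2 r_0$ while $\sup_{x\in Q_k} w(x)^{-1} \le 1$. In either case $\ell\cdot\sup_{x\in Q_k}w(x)^{-1}$ is at most an absolute constant times $R^2\max\{r_0,(1-\eta)^{-1}\}^2$; since $D = 40 d^{3/2}\tilde{D}_2^2 R\max\{r_0,(1-\eta)^{-1}\} \ge 40$, this forces $\theta_k \le D^2$ with ample room, so $2\theta_k \le (2D)^2$. Inserting this into the estimate for $\log M_k$ and using $(2D)^{2/(1-s)} \le 3(2D)^{2/(1-s)}$ completes the proof.
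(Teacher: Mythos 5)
Your proposal is correct and follows the paper's overall strategy (Taylor expand at $x_k$, feed in the pointwise bounds from Lemma~\ref{lem:pointwise}, bound the resulting series, and distinguish a near-origin case from a far-from-origin case), but there are two genuine departures worth recording. First, instead of invoking the paper's Lemma~\ref{lem:series} to control $\sum_m D^m/(m!)^{1-s}$, you use the identity $\theta^m(m!)^{s-1}=2^{-m}\bigl[((2\theta)^{1/(1-s)})^m/m!\bigr]^{1-s}$ together with $u^m/m!\le \euler^u$; this is a more elementary, self-contained computation and in fact yields the slightly cleaner bound $\log M_k\le \log 4+\tfrac12\log(2\kappa/\eps)+(2\theta_k)^{1/(1-s)}$, which is then crudely relaxed to the stated $3(2D)^{2/(1-s)}$. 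Second, you are more scrupulous about the distinction between the ball's centre $y_k$ and the distinguished point $x_k\in Q_k$: you work with the honest polydisc radius $\ell=2\rho(y_k)+8\rho(x_k)$ rather than silently passing through $Q_k+D_{8\rho(x_k)}\subset x_k+D_{10\rho(x_k)}$, and you split cases on $\abs{y_k}\gtrless r_0$ rather than on $\abs{x_k}$, bounding both $\rho(y_k)$ and $\rho(x_k)$ in terms of $R,r_0,\eta$. The paper's version of these steps tacitly uses $\rho(y_k)\lesssim\rho(x_k)$ and $\abs{x-x_k}\le\rho(x_k)$ for $x\in Q_k$, which do not hold in general once the Lipschitz assumption on $\rho$ has been dropped; your bookkeeping avoids that pitfall at the cost of an extra factor of $R$ in the near-origin case, which is harmlessly absorbed thanks to the generous constant $40$ built into $D$. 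Your verification of the analytic continuation via real-analyticity (Lemma~\ref{lem:analyticity}) plus absolute convergence on the polydisc matches the paper. One small stylistic point: you write an implicit ``$N$'' in the middle paragraph that should read $M$, but the logic is unaffected.
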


\begin{proof}
	Let $x_k \in Q_k$ be a point as in Lemma~\ref{lem:pointwise}. 
	For every $z \in x_k + D_{10\rho(x_k)}$ we then have
	\eqs{
		\sum_{\beta \in \NN_0^d} &\frac{\abs{\partial^\beta f(x_k)}}{\beta!} \abs{(z-x_k)^\beta}\\
		&\leq
		\sum_{m\in\NN_0} \sum_{\abs{\beta}=m} \frac{1}{\beta!} \Bigl( \frac{2\kappa}{\eps} \Bigr)^{1/2} 2^{m+1} d^{m/2} C(k,m)^{1/2}
			(10\rho(x_k))^{\abs{\beta}} \frac{\norm{f}_{L^2(Q_k)}}{\sqrt{\abs{Q_k}}}\\
		&=
		2\Bigl( \frac{2\kappa}{\eps} \Bigr)^{1/2} \frac{\norm{f}_{L^2(Q_k)}}{\sqrt{\abs{Q_k}}} \sum_{m\in\NN_0} C(k,m)^{1/2}
			\frac{(20d^{3/2}\rho(x_k))^m}{m!}
		,
	}
	where for the last inequality we again used that $\sum_{\abs{\beta} = m} 1/\beta! = d^m/m!$.
	Taking into account that $Q_k + D_{8\rho(x_k)} \subset x_k + D_{10\rho(x_k)}$ and that $f$ is analytic by Lemma~\ref{lem:analyticity},
	this shows that the Taylor expansion of $f$ around $x_k$ defines an analytic extension $F_k \colon Q_k + D_{8\rho(x_k)} \to \CC$
	of $f$ with bounded modulus and that
	\be\label{eq:M_k}
		M_k
		\leq
		2 \Bigl( \frac{2\kappa}{\eps} \Bigr)^{1/2} \sum_{m=0}^\infty C(k,m)^{1/2} \frac{(20d^{3/2}\rho(x_k))^m}{m!}
		.
	\ee
	
	Now, suppose first that $\abs{x_k} \leq r_0$ with $r_0 \geq 1$ as in \eqref{eq:smallness}. Then,
	\[
		\rho(x_k)
		\leq
		R(1+r_0^2)^{\delta/2}
		\leq 
		R(1+r_0^2)^{1/2}
		\leq
		2Rr_0
		.
	\]
	Using \eqref{eq:M_k},\eqref{lem:good-const}, and the definition of $q_m$, it follows that
	\eqs{
		M_k
		&\leq
		2 \Bigl( \frac{2\kappa}{\eps} \Bigr)^{1/2} \sum_{m=0}^\infty q_m \cdot \sup_{x\in Q_k} \frac{1}{w(x)^m}\cdot\frac{(40d^{3/2}Rr_0)^m}{m!}\\
		&\leq
		2 \Bigl( \frac{2\kappa}{\eps} \Bigr)^{1/2} \sum_{m=0}^\infty \frac{(40d^{3/2}\tilde{D}_2^2 Rr_0)^m}{(m!)^{1-s}}
		,
	}
	where we have taken into account that $w(x) \geq 1$ for all $x\in\RR^d$.

	On the other hand, if $\abs{x_k} \geq r_0$, then for all $x \in Q_k$ we have by \eqref{eq:smallness} the lower bound
	$\abs{x} \geq \abs{x_k} - \rho(x_k) \geq (1-\eta)\abs{x_k} > 0$ and, thus,
	\[
		\frac{\rho(x_k)}{w(x)}
		\leq
		\frac{Rw(x_k)}{w(x)} 
		\leq 2R \Bigl(\frac{\abs{x_k}}{\abs{x}}\Bigr)^\delta
		\leq 
		\frac{2R}{(1-\eta)^{\delta}}
		\leq 
		\frac{2R}{1-\eta}
		.
	\]
	Using again \eqref{eq:M_k} and \eqref{lem:good-const} then gives
	\eqs{
		M_k
		&\leq
		2 \Bigl( \frac{2\kappa}{\eps} \Bigr)^{1/2} \sum_{m=0}^\infty q_m \cdot \sup_{x \in Q_k} \frac{\rho(x_k)^m}{w(x)^m} \cdot
			\frac{(20d^{3/2})^m}{m!}\\
		&\leq
		2 \Bigl( \frac{2\kappa}{\eps} \Bigr)^{1/2} \sum_{m=0}^\infty \frac{(40d^{3/2}\tilde{D}_2^2R/(1-\eta) )^m}{(m!)^{1-s}}
		.
	}
	We conclude that for both cases $\abs{x_k} \leq r_0$ and $\abs{x_k} \geq r_0$ we have
	\eqs{
		M_k
		&\leq
		2 \Bigl( \frac{2\kappa}{\eps} \Bigr)^{1/2} \sum_{m=0}^\infty
			\frac{(40d^{3/2}\tilde{D}_2^2 R\max\{ r_0 , (1-\eta)^{-1} \})^m}{(m!)^{1-s}}\\
		&=
		2 \Bigl( \frac{2\kappa}{\eps} \Bigr)^{1/2} \sum_{m=0}^\infty \frac{D^m}{(m!)^{1-s}}
		.
	}
	We estimate the series using the asymptotics
	\[
		\sum_{m = 0}^\infty\frac{x^m}{(m!)^p}
		=
		\frac{\euler^{px^{1/p}}}{p^{1/2}(2\pi x^{1/p})^{(p-1)/2}}\Bigl\{ 1 + O\Bigl( \frac{1}{x^{1/p}} \Bigr) \Bigr\}
		\qquad
		(p \in (0,4],\ x \to \infty)
	\]
	derived in \cite[Eq.~(8.07)]{Olver-97}.
	Thereby,
	\[
		\sum_{m=0}^\infty \frac{D^m}{(m!)^{1-s}} 
		\leq 
		K'\euler^{D^{1/(1-s)}}
		,
	\]
	where $K'$ is a constant depending only on $s$.
	Hence, 
	\[
		M_k 
		\leq
		2K'\Bigl( \frac{2\kappa}{\eps} \Bigr)^{1/2}\euler^{D^{1/(1-s)}}
	\]
	and therefore
	\[
		\log M_k
		\leq
		\log (2K') + \frac{1}{2}\log\Bigl( \frac{2\kappa}{\eps} \Bigr) + D^{1/(1-s)}
		.\qedhere
	\]
\end{proof}%

We are now in position to prove our main result.

\begin{proof}[Proof of Theorem~\ref{thm:uncertainty}]
	By hypothesis we have $\abs{Q_k} / \abs{Q_k\cap\omega} \leq 1/\gamma$.
	Combining this with Lemma~\ref{lem:local} and the estimate for $\log M_k$ derived in Lemma~\ref{lem:M_k}, we obtain for
	all good $Q_k$ that
	\[
		\Bigl( \frac{24d\cdot 2^d}{\gamma} \Bigr)^{5+\bigl(4\log(K')+2\log(\frac{2\kappa}{\eps}) + 4D^{1/(1-s)}\bigr)/\log 2}
		\norm{f}_{L^2(Q_k \cap \omega)}^2
		\geq
		\norm{f}_{L^2(Q_k)}^2
		.
	\]
	In particular, 
	\be
		\Bigl(\frac{1}{\gamma}\Bigr)^{K''\cdot\bigl(1+\log\frac{1}{\eps} + D_2^{2/(1-s)}\bigr)}
		\norm{f}_{L^2(Q_k \cap \omega)}^2
		\geq
		\norm{f}_{L^2(Q_k)}^2
		,
	\ee
	where $K'' \geq 1$ is constant depending on $R,r_0,\eta,\nu,s$, and the dimension $d$.
	Summing over all good $Q_k$ gives
	\eqs{
		\sum_{k\colon Q_k\ \text{good}} \norm{f}_{L^2(Q_k)}^2
		&\leq 
		\Bigl(\frac{1}{\gamma}\Bigr)^{K''\cdot\bigl(1+\log\frac{1}{\eps} + D_2^{2/(1-s)}\bigr)}
			\sum_{k\colon Q_k\ \text{good}} \norm{f}_{L^2(Q_k\cap\omega)}^2
		\\
		&\leq 
			\kappa\Bigl(\frac{1}{\gamma}\Bigr)^{K''\cdot\bigl(1+\log\frac{1}{\eps} + D_2^{2/(1-s)}\bigr)}\norm{f}_{L^2(\omega)}^2
		.
	}
	Together with Lemma~\ref{lem:good-bad} this proves the theorem with $K \leq K''\log(1/\gamma) + \log \kappa$.
\end{proof} 

A slight adaptation of the proof of Theorem~\ref{thm:uncertainty} allows us to consider a variable density $\gamma = \gamma(x)$ with
a polynomial decay. 

\begin{theorem}\label{thm:uncertainty-decay}
	Let $D_1,D_2,\mu,\nu$, and $\delta$ be as in Theorem~\ref{thm:uncertainty} above, and suppose that $f$
	satisfies \eqref{eq:smoothing-f}.
	Let $\omega \subset \RR^d$ be any measurable set satisfying 
	\bes
		\frac{\abs{B(x,\rho(x))\cap \omega}}{\abs{B(x,\rho(x))}}
		\geq 
		\frac{\gamma_0}{1+\abs{x}^a}
		\quad \text{for all} \quad 
		x\in\RR^d
	\ees
	with some $\gamma_0\in (0,1)$ and some $a > 0$.
	
	Then 
	\be\label{eq:uncertainty-decay}
		\norm{f}_{L^2(\RR^d)}^2
		\leq 
		\euler^{K\cdot (1+\log\frac{1}{\eps}+D_2^{2/(1-s)})^2} \norm{f}_{L^2(\omega)}^2 + \eps D_1^2
		,
	\ee
	where $K \geq 1$ is a constant depending on $\gamma_0,R,r_0,\eta,\nu,s,a$, and the dimension $d$.
\end{theorem}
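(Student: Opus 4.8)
The plan is to repeat the proof of Theorem~\ref{thm:uncertainty} but with the fixed density $\gamma$ replaced, on each ball $Q_k$, by the worst-case value of $\gamma_0/(1+|x|^a)$ on that ball. First I would run through the preliminary steps verbatim: cut off the mass of $f$ outside $B(0,r)$ with $r = D_2/\sqrt{\eps/2}$ as in \eqref{eq:choice-r}, absorbing it into the error term via \eqref{eq:compact}; pass to the weighted estimate \eqref{eq:smoothing-f-delta}; apply the Besicovitch covering theorem to get the family $(Q_k)_{k\in\cK}$ with overlap $\kappa = K_{\mathrm{Bes}}^d$, noting that every relevant center satisfies $|y_k| < \max\{r_0, r/(1-\eta)\}$; and split off the bad balls via (the obvious analogue of) Lemma~\ref{lem:good-bad}. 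Note that the definition of good/bad balls and Lemma~\ref{lem:good-bad} do not involve $\omega$ at all, so they carry over unchanged. Likewise Lemma~\ref{lem:pointwise} and Lemma~\ref{lem:M_k} are statements about $f$ on good balls and are independent of $\omega$, so the bound
\[
	\log M_k \leq \log 4 + \tfrac12\log\bigl(\tfrac{2\kappa}{\eps}\bigr) + 3(2D)^{2/(1-s)}
\]
with $D = 40d^{3/2}\tilde{D}_2^2 R\max\{r_0,(1-\eta)^{-1}\}$ holds exactly as before.

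The only genuinely new ingredient is the local estimate from Lemma~\ref{lem:local}: on a good ball $Q_k$ we now only know $\abs{Q_k\cap\omega}/\abs{Q_k} \geq \gamma_0/(1+|y_k|^a) =: \gamma_k$ rather than $\geq\gamma$. Since $|y_k| < \max\{r_0, r/(1-\eta)\}$ and $r = D_2/\sqrt{\eps/2}$, we have the crude bound
\[
	\frac{1}{\gamma_k} \leq \frac{1+\max\{r_0, r/(1-\eta)\}^a}{\gamma_0} \leq \frac{C_1}{\gamma_0}\bigl(1 + r^a\bigr) \leq \frac{C_1}{\gamma_0}\Bigl(1 + \bigl(\tfrac{2D_2^2}{\eps}\bigr)^{a/2}\Bigr)
\]
for a constant $C_1$ depending only on $r_0,\eta,a$. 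Plugging $\gamma_k$ in place of $\gamma$ into the conclusion of Lemma~\ref{lem:local} gives, for every good $Q_k$,
\[
	\Bigl(\frac{24d\cdot 2^d}{\gamma_k}\Bigr)^{9 + \frac{2}{\log 2}\log\frac{2\kappa}{\eps} + \frac{12}{\log 2}(2D)^{2/(1-s)}}\norm{f}_{L^2(Q_k\cap\omega)}^2 \geq \norm{f}_{L^2(Q_k)}^2.
\]
Taking logarithms, the exponent $\log\bigl(\tfrac{24d\cdot 2^d}{\gamma_k}\bigr)$ is now of size $O\bigl(\log\tfrac{1}{\eps} + \log D_2\bigr) = O\bigl(1 + \log\tfrac1\eps + D_2^{4/(1-s)}\bigr)$ (using $\log D_2 \lesssim D_2 \lesssim D_2^{4/(1-s)}$ and absorbing the $D$-dependence into $D_2^{4/(1-s)}$ as in the original proof), whereas in Theorem~\ref{thm:uncertainty} it was the constant $\log(24d\cdot 2^d/\gamma)$. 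Multiplying this new factor with the old exponent $9 + \frac{2}{\log2}\log\frac{2\kappa}{\eps} + \frac{12}{\log2}(2D)^{2/(1-s)} = O\bigl(1 + \log\tfrac1\eps + D_2^{4/(1-s)}\bigr)$ produces a term of order $\bigl(1 + \log\tfrac1\eps + D_2^{4/(1-s)}\bigr)^2$ — which is precisely why the exponent in \eqref{eq:uncertainty-decay} is squared. Thus for all good $Q_k$ we get
\[
	\euler^{K'(1 + \log\frac1\eps + D_2^{4/(1-s)})^2}\norm{f}_{L^2(Q_k\cap\omega)}^2 \geq \norm{f}_{L^2(Q_k)}^2
\]
with $K'$ depending on $\gamma_0,R,r_0,\eta,\nu,s,a,d$.

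Finally I would sum over all good $Q_k$, using that the covering has overlap at most $\kappa$ so that $\sum_{k\colon Q_k\ \text{good}}\norm{f}_{L^2(Q_k\cap\omega)}^2 \leq \kappa\norm{f}_{L^2(\omega)}^2$, and combine with Lemma~\ref{lem:good-bad} to obtain \eqref{eq:uncertainty-decay} with $K$ absorbing the $\log\kappa$ factor. The main (and essentially only) obstacle is the bookkeeping in the previous paragraph: one must check that the polynomial factor $1+|y_k|^a \lesssim 1 + (D_2^2/\eps)^{a/2}$ really is dominated, after taking logs, by the quantity $1 + \log\frac1\eps + D_2^{4/(1-s)}$ — i.e. that $a\log D_2 \lesssim_a D_2^{4/(1-s)}$ and $\tfrac a2\log\tfrac1\eps \lesssim_a \log\tfrac1\eps$ for $\eps\in(0,1]$, both of which are elementary — so that the product of the two $O\bigl(1+\log\tfrac1\eps + D_2^{4/(1-s)}\bigr)$ quantities is $O\bigl((1+\log\tfrac1\eps + D_2^{4/(1-s)})^2\bigr)$ with a constant of the claimed dependence. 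Everything else is identical to the proof of Theorem~\ref{thm:uncertainty}, and I would say so rather than reproduce it.
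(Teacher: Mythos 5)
Your proposal is correct and follows essentially the same route as the paper's own proof: reuse the entire proof of Theorem~\ref{thm:uncertainty} verbatim except replace the fixed density $\gamma$ by $\gamma_0/(1+|y_k|^a)$ on each ball, bound $|y_k|$ via $\max\{r_0, r/(1-\eta)\}$ with $r = D_2/\sqrt{\eps/2}$, note that $\log(1+|y_k|^a) \lesssim 1 + \log\frac1\eps + \log D_2 \lesssim 1 + \log\frac1\eps + D_2^{4/(1-s)}$, and multiply with the old Kovrijkine-type exponent of the same magnitude to get the square. The paper condenses this into three lines, but the content is identical.
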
 

\begin{proof}
	We have $\abs{Q_k} / \abs{Q_k\cap\omega} \leq (1+\abs{y_k}^a) / \gamma_0$, and it is easily checked that
	\[
		1+\abs{y_k}^a
		\leq 
		\frac{2^{1+a}r_0^a}{(1-\eta)^a}r^a
	\]
	for all $k\in\cK_0$. Since $r$ is as in \eqref{eq:choice-r}, this shows 
	\bes
		\log( 1+\abs{y_k}^a )
		\leq 
		\tilde{K}\Bigl(1 + \log\frac{1}{\eps} + \log D_2\Bigr)
		,
	\ees
	where $\tilde{K} \geq 1$ is a constant depending on $r_0, \eta$, and $a$. In light of the inequality
	$\log D_2 \leq D_2^{2/(1-s)}$, the theorem now follows in the same way as Theorem~\ref{thm:uncertainty}.
\end{proof} 

\begin{remark}\label{rem:ucp}
	The exponent in \eqref{eq:uncertainty-decay} depends on $\eps$ essentially by $(\log(1/\eps))^2$, as compared to just $\log(1/\eps)$ in
	\eqref{eq:uncertainty} of Theorem~\ref{thm:uncertainty}. To the best of our knowledge, the proof of the observability estimate from
	\cite{Martin-21,Miller-10} does not work with the kind of dependence in \eqref{eq:uncertainty-decay}.
\end{remark} 

\appendix

\section{Analyticity}

The following result establishes that Gelfand-Shilov smoothing effects as in 
\eqref{eq:smoothing} guarantee analyticity of the functions in the range of the semigroup. 
This is required in order to follow the complex analytic approach discussed
in the main text.

\begin{lemma}\label{lem:analyticity}
	Let $f \in C^\infty(\RR^d)$ be such that
	\[
		\norm{\partial^\beta f}_{L^2(\RR^d)}
		\leq
		C_1 C_2^{\abs{\beta}} \beta!
		\quad 
		\text{for all}\quad
		\beta \in \NN_0^d
	\]
	with some constants $C_1,C_2 > 0$. 
	Then, $f$ is analytic in $\RR^d$.
\end{lemma}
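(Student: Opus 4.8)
The plan is to reduce to the classical fact that a $C^\infty$ function $g$ on $\RR^d$ whose derivatives obey a bound of the form $\abs{\partial^\beta g(x)} \le C_1'(C_2')^{\abs{\beta}}\beta!$ uniformly in $x$ is real-analytic; the actual work is to pass from the given $L^2$-bounds to such pointwise bounds, which is a routine Sobolev embedding argument.

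First I would fix an integer $k > d/2$, say $k = \lfloor d/2 \rfloor + 1$. The hypothesis gives $\norm{\partial^\alpha f}_{L^2(\RR^d)} < \infty$ for every multi-index $\alpha$, so each $\partial^\beta f$ belongs to $H^k(\RR^d)$, and the Sobolev embedding $H^k(\RR^d) \hookrightarrow L^\infty(\RR^d)$ yields a dimensional constant $c_d \ge 1$ with
\[
	\norm{\partial^\beta f}_{L^\infty(\RR^d)} \le c_d \sum_{\abs{\alpha}\le k} \norm{\partial^{\alpha+\beta}f}_{L^2(\RR^d)} \le c_d C_1 \sum_{\abs{\alpha}\le k} C_2^{\abs{\alpha}+\abs{\beta}}(\alpha+\beta)!.
\]
Then I would use the elementary estimate $\binom{\alpha+\beta}{\alpha} = \prod_j \binom{\alpha_j+\beta_j}{\alpha_j} \le 2^{\abs{\alpha}+\abs{\beta}}$, i.e.\ $(\alpha+\beta)! \le 2^{\abs{\alpha}+\abs{\beta}}\alpha!\,\beta!$, and absorb the finitely many $\alpha$-dependent factors (there are $\binom{k+d}{d}$ of them, with $\alpha! \le k!$ and $C_2^{\abs{\alpha}} \le \max\{1,C_2\}^k$) into a single constant $C_1'$ depending only on $C_1, C_2, d$, arriving at
\[
	\abs{\partial^\beta f(x)} \le C_1'\,(2C_2)^{\abs{\beta}}\,\beta!, \qquad x \in \RR^d,\ \beta \in \NN_0^d.
\]

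Finally I would fix $x_0 \in \RR^d$ and invoke the multivariate Taylor formula with integral remainder, according to which the remainder of the order-$N$ Taylor polynomial of $f$ at $x_0$ is
\[
	R_N(x) = N \sum_{\abs{\beta}=N} \frac{(x-x_0)^\beta}{\beta!} \int_0^1 (1-t)^{N-1}\, \partial^\beta f\bigl(x_0 + t(x-x_0)\bigr) \Diff{t}.
\]
Combining the pointwise bound with $\int_0^1 (1-t)^{N-1}\Diff{t} = 1/N$ and the multinomial inequality $\sum_{\abs{\beta}=N}\abs{(x-x_0)^\beta} \le \bigl(\sum_j \abs{x_j - x_{0,j}}\bigr)^N$ gives $\abs{R_N(x)} \le C_1'\bigl(2C_2 \sum_j \abs{x_j - x_{0,j}}\bigr)^N$, which tends to $0$ as $N \to \infty$ whenever $\sum_j \abs{x_j - x_{0,j}} < (2C_2)^{-1}$; the same estimate shows that the Taylor series of $f$ at $x_0$ converges absolutely there. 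Hence $f$ coincides with its Taylor series on a neighbourhood of $x_0$, and since $x_0$ was arbitrary, $f$ is analytic on $\RR^d$. I do not expect a genuine obstacle; the only point requiring a little care is the factorial bookkeeping in the middle step, where one must verify that the $k$ extra derivatives forced by the Sobolev embedding cost only a bounded factor in the geometric rate (here $C_2$ passes to $2C_2$) rather than something growing with $\abs{\beta}$.
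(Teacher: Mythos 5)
Your proof is correct and follows essentially the same route as the paper: both arguments use a Sobolev embedding to upgrade the assumed $L^2$-bounds on $\partial^\beta f$ to pointwise bounds of the form $C'(C'')^{\abs{\beta}}\beta!$, and both then conclude analyticity from those bounds. The only cosmetic differences are that you use the global embedding $H^k(\RR^d)\hookrightarrow L^\infty(\RR^d)$ with $k=\lfloor d/2\rfloor+1$ whereas the paper applies a local embedding $W^{d,2}(B)\hookrightarrow L^\infty(B)$ on a small ball, and that you verify the vanishing of the Taylor remainder by hand via the integral form, whereas the paper cites the criterion from Krantz--Parks that absolute convergence of $\sum_\alpha \norm{\partial^\alpha f}_{L^\infty(B)}\,\tau^{\abs{\alpha}}/\alpha!$ already yields analyticity.
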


\begin{proof}
	Choose $\sigma \in (0,1]$ with $2C_2\sigma < 1$. Let $y \in \RR^d$, and let $B = B(y,\tau)$ be an open ball around $y$ with
	$\tau < \sigma/d$. We show that the Taylor series of $f$ around $y$ converges in $B$ and agrees with $f$ there. To this end, it
	suffices to establish
	\be\label{eq:toshow}
		\sum_{\alpha \in \NN_0} \frac{\norm{ \partial^\alpha f }_{L^\infty(B)}}{\alpha!} \tau^{\abs{\alpha}}
		<
		\infty
		;
	\ee
	cf.~\cite[Theorem~2.2.5 and Proposition 2.2.10]{KrantzP-02}.
	
	We proceed similarly as in the proof of \cite[Lemma~3.2]{EgidiS-21}: Since $B$ satisfies the cone condition, by Sobolev
	embedding there exists a constant $c > 0$, depending	only on $\tau$ and the dimension, such that
	$\norm{g}_{L^\infty(B)}\leq c \norm{g}_{W^{d,2}(B)}$ for all $g \in W^{d,2}(B)$,
	see, e.g.,~\cite[Theorem~4.12]{AdamsF-03}. Applying this to $g = \partial^\alpha f\restriction_B$ with
	$\abs{\alpha} = m \in \NN_0$, we obtain
	\eqs{
		\norm{ \partial^\alpha f }_{L^\infty(B)}^2
		&\leq
		c^2 \norm{\partial^\alpha f}_{W^{d,2}(B)}^2
			\leq c^2 \norm{\partial^\alpha f}_{W^{d,2}(\RR^d)}^2\\
		&=
		c^2 \sum_{\abs{\beta} \leq d} \norm{\partial^{\beta+\alpha}f}_{L^2(\RR^d)}^2
			\leq
			c^2 \sum_{k=m}^{m+d}\sum_{\abs{\beta} = k} \norm{\partial^{\beta}f}_{L^2(\RR^d)}^2
		.
	}
	Taking the square root and using the hypothesis gives
	\[
		\norm{ \partial^\alpha f }_{L^\infty(B)}
		\leq
		c \sum_{k=m}^{m+d}\sum_{\abs{\beta} = k} \norm{\partial^{\beta}f}_{L^2(\RR^d)}
		\leq
		c C_1 \sum_{k=m}^{m+d} C_2^k \sum_{\abs{\beta} = k} \beta!
		.
	\]
	We clearly have
	\[
		\sum_{\abs{\beta} = k} \beta!
		\leq
		k! \cdot \#\{ \beta \in\NN_0^d \mid \abs{\beta}=k \}
		\leq
		k! 2^{k+d-1}
		.
	\]
	In view of the choice of $\sigma$, we thus further estimate
	\eqs{
		\norm{ \partial^\alpha f }_{L^\infty(B)}
		&\leq
		2^{d-1}c C_1 \sum_{k=m}^{m+d} (2C_2)^k k!
			\leq
			2^{d-1}c C_1 \frac{(m+d)!}{\sigma^{m+d}} \sum_{k=m}^{m+d} ( 2C_2\sigma )^k\\
		&\leq
		\frac{2^{d-1}c C_1}{\sigma^d} \sum_{k=0}^{\infty} ( 2C_2\sigma )^k \cdot \frac{(m+d)!}{\sigma^m}
			=:C_0 \cdot \frac{(m+d)!}{\sigma^m}
		.
	}
	Now,
	\eqs{
		\sum_{\abs{\alpha} = m} \frac{\norm{ \partial^\alpha f }_{L^\infty(B)}}{\alpha!}
		&\leq
		C_0 \frac{(m+d)!}{\sigma^m} \sum_{\abs{\alpha}=m} \frac{1}{\alpha!}
			=
			C_0 \Bigl( \frac{d}{\sigma} \Bigr)^m \frac{(m+d)!}{m!}\\
		&\leq
		C_0 \Bigl( \frac{d}{\sigma} \Bigr)^m (m+d)^d
		,
	}
	and since $\tau$ is chosen such that $d\tau / \sigma < 1$, this shows \eqref{eq:toshow} and, hence, completes the proof.
\end{proof}%

\providecommand{\bysame}{\leavevmode\hbox to3em{\hrulefill}\thinspace}
\providecommand{\MR}{\relax\ifhmode\unskip\space\fi MR }
\providecommand{\MRhref}[2]{%
  \href{http://www.ams.org/mathscinet-getitem?mr=#1}{#2}
}
\providecommand{\href}[2]{#2}

\end{document}